\newcolumntype{C}{>{$}c<{$}}
\newtheorem{theorem}{Theorem}
\newtheorem{prop}{Proposition}
\newtheorem{lemma}{Lemma}
\newtheorem{rem}{Remark}
\newtheorem{exmp}{Example}
\begin{document}
\title[Zigzags in combinatorial tetrahedral chains]{Zigzags in combinatorial tetrahedral chains and the associated Markov chain}
\author{Adam Tyc}
\subjclass[2000]{}
\keywords{tetrahedral chain, zigzag, $z$-monodromy, Markov chain} 

\
\address{Faculty of Mathematics and Computer Science, 
University of Warmia and Mazury, S{\l}oneczna 54, Olsztyn, Poland}
\email{adamtyc.math@gmail.com}

\maketitle
\begin{abstract}
Zigzags in graphs embedded in surfaces are cyclic sequences of edges whose any two consecutive edges are different, have a common vertex and belong to the same face. 
We investigate zigzags in randomly constructed combinatorial tetrahedral chains. 
Every such chain contains at most $3$ zigzags up to reversing. 
The main result is the limit of the probability that a randomly constructed tetrahedral chain contains precisely $k\in\{1,2,3\}$ zigzags up to reversing as its length approaches infinity. 
Our key tool is the Markov chain whose states are types of $z$-monodromies. 
\end{abstract}

\section{Introduction}
A tetrahedral chain is a sequence of regular tetrahedra where any two consecutive tetrahedra are glued together face to face. 
In the {\it New Scottish Book} (Problem 288) and in \cite{HS-prob} Hugo Steinhaus conjectured that there is no closed tetrahedral chain. 
This fact is proved by Stanisław Świerczkowski \cite{SSw}. 
Michael Elgersma and Stan Wagon shown that there is a tetrahedral chain with arbitrarily small discrepancy from closure \cite{ElWa}. 
Some results on tetrahedral chains can be found in \cite{BJ, Stewart}. 


We say that a tetrahedral chain is {\it proper} if it does not self-intersect. 
For some proper tetrahedral chains there are faces that cannot be used to gluing the next tetrahedron. 
Thus, faces in such tetrahedral chains cannot be chosen completely randomly. 
For this reason, we do not assume that tetrahedra are regular and consider {\it combinatorial tetrahedral chains}, i.e. sequences of graphs $K_4$ embedded in spheres and glued together face to face. 
In the present paper, we investigate {\it zigzags} in combinatorial tetrahedral chains. 

Recall that zigzags in graphs embedded in surfaces are sequences of edges such that any two consecutive edges are different, have a common vertex and belong to the same face \cite{DDS-book}. 
Zigzags in regular polyhedra are skew polygons without self-intersections. 
For this reason, Coxeter called them {\it Petrie polygons} in \cite{Coxeter}. 
Zigzags are known also as {\it closed left-right paths} \cite{GR-book, Shank}. 
Zigzags are used in mathematical chemistry to enumerating all combinatorial possibilities for fullerenes \cite{BD, DDS-book}
and they are exploited in computer graphics \cite{TriApp}. 
{\it $Z$-knotted} embedded graphs (embedded graphs containing a single zigzag) are related to {\it Gauss code problem}: 
the zigzag of $z$-knotted graph is a certain Gauss code; in other words, $z$-knotted graphs are geometrical realizations of Gauss codes \cite{CrRos, GR-book, Lins2}. 
More results concerning zigzags can be found in \cite{PT1, PT3, PT2, T2, T1}.

A tetrahedral chain contains at most $3$ zigzags (up to reversing). 
Our main result concerns the probability that a randomly step-by-step constructed tetrahedral chain contains precisely $k\in\{1,2,3\}$ zigzags (up to reversing). 
We determine the limit of this probability as the length of combinatorial tetrahedral chain approaches infinity. 
Our reasonings are as follows. 
The fact that a combinatorial tetrahedral chain contains precisely $k$ zigzags (up to reversing) will be reformulated in terms of $z$-monodromies (Section 5). 
Next, we construct a digraph whose vertices are all possible types of $z$-monodromies (Section 6). 
Recall that there are precisely $7$ types of $z$-monodromies which will be denoted by (M1)--(M7) \cite{PT2}. 
We determine types of $z$-monodromies of faces which arise by partition of a face with $z$-monodromy (M$i$). 
This operation on a face is a step in the construction of a combinatorial tetrahedral chain. 
The digraph contains an edge from (M$i$) to (M$j$) if by gluing a tetrahedron to a face with the $z$-monodromy (M$i$) we obtain the $z$-monodromy (M$j$) for at least one of the three new faces. 
For each edge from (M$i$) to (M$j$) we assign a probability that in the construction after gluing a tetrahedron to a face with the $z$-monodromy (M$i$) we choose a face with the $z$-monodromy (M$j$). 
This digraph is a diagram of an ergodic Markov chain and the main result follows from its well-known properties (Section 7). 


\section{Zigzags in triangulations}

Let $\Gamma$ be a {\it triangulation} of connected closed $2$-dimensional surface $M$ (not necessarily orientable), i.e. 
a $2$-cell embedding of a connected simple finite graph in $M$ whose all faces are triangles \cite[Section 3.1]{MT-book}. 
Then $\Gamma$ satisfies the following two conditions: 
(1) every edge is contained in precisely two distinct faces, 
(2) the intersection of two distinct faces is an edge or a vertex or the empty set. 
Two distinct edges are said to be {\it adjacent} if there is a face containing both of them. 
Since each face of $\Gamma$ is a triangle, any two adjacent edges have a common vertex. 
Two faces are {\it adjacent} if their intersection is an edge. 

A {\it zigzag}  in $\Gamma$ is a sequence of edges $\{e_{i}\}_{i\in {\mathbb N}}$ satisfying the following conditions for every $i\in {\mathbb N}$:
\begin{enumerate}
\item[$\bullet$] $e_{i},e_{i+1}$ are adjacent, 
\item[$\bullet$] the faces containing $e_{i},e_{i+1}$ and $e_{i+1},e_{i+2}$ are distinct and the edges $e_{i}$ and $e_{i+2}$ are disjoint. 
\end{enumerate} 
Since $\Gamma$ is finite, 
for every zigzag $Z=\{e_{i}\}_{i\in {\mathbb N}}$
there is a natural number $n\geq 1$ such that $e_{i+n}=e_{i}$ for every $i\in {\mathbb N}$. 
Thus, $Z$ can be considered as a cyclic sequence
$e_{1},\dots, e_{n}$, where $n$ is the smallest number satisfying this condition. 
Observe that $Z$ can be written as a cyclic sequence of vertices $v_1,\dots,v_n$, where $v_i$ and $v_{i+1}$ form an edge $e_i$ for $i=1,\dots, n-1$ and the edge $e_n$ consists of $v_n$ and $v_1$. 

Each ordered pair of edges contained in a face completely determines a zigzag. 
Conversely, any zigzag is completely determined by any pair of consecutive edges belonging to it. 
If $Z=\{e_{1},\dots, e_{n}\}$ is a zigzag, then the sequence $Z^{-1}=\{e_{n},\dots, e_{1}\}$ also is a zigzag and we call it {\it reversed} to $Z$. 
Each zigzag $Z$ is not self-reversed, i.e. $Z\neq Z^{-1}$ (see, for example, \cite{PT2}). 
Denote by $\mathcal{Z}(\Gamma)$ the set of all zigzags in $\Gamma$. 
Thus, $|\mathcal{Z}(\Gamma)|$ is even. 
If $|\mathcal{Z}(\Gamma)|=2k$, then we say that $\Gamma$ contains precisely $k$ zigzags {\it up to reversing}. 

If all edges in $Z$ are mutually distinct, then $Z$ is {\it edge-simple} (note that vertices can repeat in an edge-simple zigzag). 
The triangulation $\Gamma$ is {\it $z$-knotted} if $|\mathcal{Z}(\Gamma)|=2$, i.e. $\Gamma$ contains a single zigzag up to reversing. 
Let $F$ be a face in $\Gamma$ and let $\mathcal{Z}(F)$ be the set of all zigzags containing edges of $F$. 
We say that $\Gamma$ is {\it locally $z$-knotted} for $F$ if $|\mathcal{Z}(F)|=2$; in other words, there is a single zigzag (up to reversing) which contains edges of $F$. 
A triangulation is $z$-knotted if and only if it is locally $z$-knotted for each face \cite[Theorems 4.4 and 4.7]{PT2}. 

\begin{exmp}\label{ex1}{\rm 
Consider a tetrahedron $T$, i.e. an embedding of the graph $K_4$ in a sphere $\mathbb{S}^2$. 
Denote vertices of $T$ by $v_1,v_2,v_3,v_4$. 
Let $e_{ij}$ be the edge $v_iv_j$ (our edges are non-oriented, so $e_{ij}=e_{ji}$) and let $F_i$ be the face of $T$ which does not contain $v_i$ (where $i,j\in\{1,2,3,4\})$. 
\begin{center}
\begin{tikzpicture}[scale=0.4]


\coordinate (A) at (0.5,4);
\coordinate (1) at (0,-1);
\coordinate (3) at (-2,0.25);
\coordinate (2) at (3,0.25);

\draw[fill=black] (A) circle (3.5pt);

\draw[fill=black] (1) circle (3.5pt);
\draw[fill=black] (3) circle (3.5pt);
\draw[fill=black] (2) circle (3.5pt);

\draw[thick] (2)--(1)--(3);
\draw[thick, dashed] (3)--(2);

\draw[thick] (A)--(1);
\draw[thick] (A)--(2);
\draw[thick] (A)--(3);

\node at (0.5,4.5) {$v_4$};

\node at (0,-1.6) {$v_1$};
\node at (-2.65,0.25) {$v_3$};
\node at (3.65,0.25) {$v_2$};

\end{tikzpicture}
\captionof{figure}{ }
\end{center}
The set $\mathcal{Z}(T)$ consists of precisely $6$ zigzags:
$$e_{12},e_{23},e_{34},e_{41};\hspace{1cm}e_{12},e_{24},e_{43},e_{31};\hspace{1cm}e_{14},e_{42},e_{23},e_{31}$$
and their reverses. 
All these zigzags are edge-simple. 
Observe that each zigzag passes through all faces of $T$, i.e. $\mathcal{Z}(T)=\mathcal{Z}(F_i)$ for $i=1,2,3,4$. 
}\end{exmp}

\begin{exmp}\label{ex2}{\rm 
A $3$-gonal bipyramid $BP_3$ consists of a $3$-cycle embedded in $\mathbb{S}^2$ whose vertices are connected with two disjoint vertices (see Fig. 2). 
Denote the vertices of $3$-cycle by $1,2,3$ and the two other vertices by $a,b$. 
\begin{center}
\begin{tikzpicture}[scale=0.3]


\coordinate (A) at (0.5,4);
\coordinate (B) at (0.5,-5);
\coordinate (1) at (0,-1);
\coordinate (3) at (-2,0);
\coordinate (2) at (3,0);

\draw[fill=black] (A) circle (3.5pt);
\draw[fill=black] (B) circle (3.5pt);

\draw[fill=black] (1) circle (3.5pt);
\draw[fill=black] (3) circle (3.5pt);
\draw[fill=black] (2) circle (3.5pt);

\draw[thick] (2)--(1)--(3);
\draw[thick, dashed] (3)--(2);

\draw[thick] (A)--(1);
\draw[thick] (A)--(2);
\draw[thick] (A)--(3);

\draw[thick] (B)--(1);
\draw[thick] (B)--(2);
\draw[thick] (B)--(3);

\node at (0.5,4.6) {$a$};
\node at (0.5,-5.8) {$b$};

\node at (0.5,-1.5) {$1$};
\node at (-2.6,0) {$3$};
\node at (3.6,0) {$2$};

\end{tikzpicture}
\captionof{figure}{ }
\end{center}
This triangulation contains a single zigzag up to reversing 
$$a1,12,2b,b3,31,1a,a2,23,3b,b1,12,2a,a3,31,1b,b2,23,3a.$$
Thus, $BP_3$ is $z$-knotted. 
For this reason, both zigzags pass through all faces of $BP_3$. 
}\end{exmp}

\section{Combinatorial tetrahedral chains}

Let $\Gamma$ and $\Gamma'$ be triangulations of connected closed $2$-dimensional surfaces $M$ and $M'$ (respectively). 
Suppose that $F$ is a face in $\Gamma$ and $F'$ is a face in $\Gamma'$. 
Consider a homeomorphism $g:\partial F\to\partial F'$ which sends every vertex of $F$ to a vertex of $F'$, i.e. for $i\in\{1,2,3\}$ if $v_i$ are the vertices of $F$,  then $v'_i=g(v_i)$ are the vertices of $F'$. 
Such homeomorphisms will be called {\it special}. 

Consider a graph embedded in the connected sum $M\#M'$ whose vertex set is the union of the vertex sets of $\Gamma$ and $\Gamma'$, where each $v_i$ is identified with $v'_i$ and whose edge set is the union of the edge sets of $\Gamma$ and $\Gamma'$, where each edge $v_iv_j$ is identified with the edge $v'_iv'_j$. This embedded graph is called the {\it connected sum} of $\Gamma$ and $\Gamma'$ and it is denoted by $\Gamma\#_g\Gamma'$. 
Note that all faces of $\Gamma$ or $\Gamma'$ other than $F$ and $F'$ are faces of $\Gamma\#_g\Gamma'$. 
The connected sum can be depended on $g$. 

\begin{exmp}\label{ex3}{\rm 
Any connected sum of a $3$-gonal bipyramid and a tetrahedron is as in Fig. 3.
\begin{center}
\begin{tikzpicture}[scale=0.6]


\begin{scope}[rotate=30]
\draw[fill=black] (0,1.5) circle (2pt); 
\draw[fill=black] (0,-1) circle (2pt); 
\draw[fill=black] (-0.5,0) circle (2pt); 

\draw[fill=black] (-2,0.5) circle (2pt); 
\draw[fill=black] (2,0) circle (2pt); 
\draw[fill=black] (-2,-2) circle (2pt); 

\coordinate (A) at (0,1.5);
\coordinate (B) at (0,-1);
\coordinate (C) at (-0.5,0);
\coordinate (L) at (-2,0.5);
\coordinate (P) at (2,0);
\coordinate (LD) at (-2,-2);

\draw[thick, dashed, line width=1.75pt] (A)--(B);
\draw[thick, line width=1.75pt] (A)--(C); 
\draw[thick, line width=1.75pt] (B)--(C);

\draw[thick] (L)--(A);
\draw[thick, dashed] (L)--(B);
\draw[thick] (L)--(C);

\draw[thick] (P)--(A);
\draw[thick] (P)--(B);
\draw[thick] (P)--(C);

\draw[thick] (LD)--(B);
\draw[thick] (LD)--(C);
\draw[thick] (LD)--(L);

\end{scope}

\end{tikzpicture}
\captionof{figure}{ }
\end{center}
}\end{exmp}

\begin{exmp}\label{ex4}{\rm 

Any connected sum of two copies of a $3$-gonal bipyramid is one of three triangulations presented in Fig. 4. 
\begin{center}
\begin{tikzpicture}[scale=0.7]

\begin{scope}[xshift=-1cm]
\draw[fill=black] (0,1.5) circle (2pt); 
\draw[fill=black] (0,-1) circle (2pt); 
\draw[fill=black] (-0.5,0) circle (2pt); 

\draw[fill=black] (-2,0.5) circle (2pt); 
\draw[fill=black] (1.5,0.5) circle (2pt); 
\draw[fill=black] (-2,-1.5) circle (2pt); 
\draw[fill=black] (1.5,-1.5) circle (2pt); 

\coordinate (A) at (0,1.5);
\coordinate (B) at (0,-1);
\coordinate (C) at (-0.5,0);
\coordinate (L) at (-2,0.5);
\coordinate (P) at (1.5,0.5);
\coordinate (LD) at (-2,-1.5);
\coordinate (PD) at (1.5,-1.5);

\draw[thick, dashed, line width=1.75pt] (A)--(B);
\draw[thick, line width=1.75pt] (A)--(C); 
\draw[thick, line width=1.75pt] (B)--(C);

\draw[thick] (L)--(A);
\draw[thick, dashed] (L)--(B);
\draw[thick] (L)--(C);

\draw[thick] (P)--(A);
\draw[thick, dashed] (P)--(B);
\draw[thick] (P)--(C);

\draw[thick] (LD)--(B);
\draw[thick] (LD)--(C);
\draw[thick] (LD)--(L);

\draw[thick] (PD)--(B);
\draw[thick] (PD)--(C);
\draw[thick] (PD)--(P);

\node at (0,-2) {$(1)$};
\end{scope}
\begin{scope}[xshift=3.75cm, yshift=-0.25cm]
\draw[fill=black] (0,1.5) circle (2pt); 
\draw[fill=black] (0,-1) circle (2pt); 
\draw[fill=black] (0.5,0.1) circle (2pt); 

\draw[fill=black] (-1.5,0.5) circle (2pt); 
\draw[fill=black] (2,0.5) circle (2pt); 
\draw[fill=black] (-1.25,-1.25) circle (2pt); 
\draw[fill=black] (1.5,2) circle (2pt); 

\coordinate (A) at (0,1.5);
\coordinate (B) at (0,-1);
\coordinate (C) at (0.5,0.1);
\coordinate (L) at (-1.5,0.5);
\coordinate (P) at (2,0.5);
\coordinate (LD) at (-1.25,-1.25);
\coordinate (PG) at (1.5,2);

\draw[thick, dashed, line width=1.75pt] (A)--(B);
\draw[thick, line width=1.75pt] (A)--(C); 
\draw[thick, line width=1.75pt] (B)--(C);

\draw[thick] (L)--(A);
\draw[thick, dashed] (L)--(B);
\draw[thick] (L)--(C);

\draw[thick, dashed] (P)--(A);
\draw[thick] (P)--(B);
\draw[thick] (P)--(C);

\draw[thick] (LD)--(B);
\draw[thick] (LD)--(C);
\draw[thick] (LD)--(L);

\draw[thick] (PG)--(A);
\draw[thick] (PG)--(C);
\draw[thick] (PG)--(P);

\node at (0,-1.75) {$(2)$};
\end{scope}
\begin{scope}[xshift=9cm, yshift=-0.25cm]
\draw[fill=black] (0,1.5) circle (2pt); 
\draw[fill=black] (0,-1) circle (2pt); 
\draw[fill=black] (0.35,0.1) circle (2pt); 

\draw[fill=black] (-1.5,0.5) circle (2pt); 
\draw[fill=black] (1.75,0.25) circle (2pt); 
\draw[fill=black] (-1.25,-1.25) circle (2pt); 
\draw[fill=black] (1.3,1.5) circle (2pt); 

\coordinate (A) at (0,1.5);
\coordinate (B) at (0,-1);
\coordinate (C) at (0.35,0.1);
\coordinate (L) at (-1.5,0.5);
\coordinate (P) at (1.75,0.25);
\coordinate (LD) at (-1.25,-1.25);
\coordinate (PG) at (1.3,1.5);

\draw[thick, dashed, line width=1.75pt] (A)--(B);
\draw[thick, line width=1.75pt] (A)--(C); 
\draw[thick, line width=1.75pt] (B)--(C);

\draw[thick] (L)--(A);
\draw[thick, dashed] (L)--(B);
\draw[thick] (L)--(C);

\draw[thick] (P)--(A);
\draw[thick] (P)--(B);
\draw[thick] (P)--(C);

\draw[thick] (LD)--(B);
\draw[thick] (LD)--(C);
\draw[thick] (LD)--(L);

\draw[thick, dashed] (PG)--(B);
\draw[thick] (PG)--(A);
\draw[thick] (PG)--(P);

\node at (0,-1.75) {$(3)$};
\end{scope}

\end{tikzpicture}
\captionof{figure}{ }
\end{center}
Note that (2) and (3) are different embeddings of isomorphic graphs. 
}\end{exmp}

Let $\{T^{(i)}\}_{i=1}^n$ be a sequence of tetrahedra where
$$\{v^{(i)}_1, v^{(i)}_2, v^{(i)}_3, v^{(i)}_4\}\text{ and }\{F^{(i)}_1, F^{(i)}_2, F^{(i)}_3, F^{(i)}_4\}$$ are the set of vertices and the set of faces of $T^{(i)}$, respectively (as in Example \ref{ex1}, the face $F^{(i)}_j$ does not contain $v^{(i)}_j$). 
Now, for each $i=1,\dots,n-1$ we define a special homeomorphism 
$$g_i:\partial F^{(i)}_{p_i}\to\partial F^{(i+1)}_{q_i},$$ 
where $p_i,q_i\in\{1,2,3,4\}$, such that 
$q_i\neq p_{i+1}$ for $i=1,\dots,n-2.$
In other words, the image of $g_{i}$ and the domain of $g_{i+1}$ are the boundaries of different faces in the tetrahedron $T_{i+1}$ for $i=1,\dots,n-2$. 

Using the sequence of special homeomorphisms $\{g_i\}_{i=1}^{n-1}$, for $k=1,\dots,n$ we define a {\it combinatorial tetrahedral chain $\Theta_k$} (or a {\it combinatorial tetrahedral chain of length $k$}) recursively as follows
\begin{enumerate}
\item[$\bullet$] $\Theta_1=T^{(1)}$,
\item[$\bullet$] $\Theta_k=\Theta_{k-1}\#_{g_{k-1}}T^{(k)}$ for $k=2,\dots,n$.
\end{enumerate} 
It is clear that a combinatorial tetrahedral chain is a triangulation of $\mathbb{S}^2$.
For $k=1,2,3$ a combinatorial tetrahedral chain of length $k$ is unique: $\Theta_1$ is a tetrahedron (see Example \ref{ex1}), $\Theta_2$ is the $3$-gonal bipyramid (see Example \ref{ex2}), $\Theta_3$ is the connected sum of a $3$-gonal bipyramid and a tetrahedron (see Example \ref{ex3}). 
By Example \ref{ex4} there are precisely three possibilities for a combinatorial tetrahedral chain of length $4$ (the connected sum of two copies of $BP_3$). 

\section{Main result}

From this moment, combinatorial tetrahedral chains will be called simply {\it tetrahedral chains}. 
The definition of tetrahedral chains can be seen as a step-by-step construction of $\Theta_n$. 
We start with a tetrahedron where we choose a face and glue another tetrahedron to it. 
In the next steps we glue a new tetrahedron to one of three faces of the tetrahedron attached in the previous step. 
Suppose that in each step of this construction faces are chosen with equal probability, i.e. $\frac{1}{4}$ for the first step and $\frac{1}{3}$ in each of the remaining steps. 
We denote by $p^{(k)}_n$ the probability that the construction produces a tetrahedral chain with precisely $k$ zigzags up to reversing.


\begin{theorem}\label{th1}
Every tetrahedral chain contains at most $3$ zigzags up to reversing and
\begin{enumerate}
\item[(1)] $\displaystyle{\lim_{n \to \infty} p^{(1)}_n=\frac{8}{15}}$,
\item[(2)] $\displaystyle{\lim_{n \to \infty} p^{(2)}_n=\frac{2}{5}}$,
\item[(3)] $\displaystyle{\lim_{n \to \infty} p^{(3)}_n=\frac{1}{15}}$. 
\end{enumerate}
The above sequences of probabilities converge at an exponential rate. 
\end{theorem}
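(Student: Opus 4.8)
The plan is to reduce the whole problem to the analysis of a finite Markov chain on the set of $z$-monodromy types, exactly as the introduction outlines. I would proceed in four stages.

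\textbf{Stage 1: Reformulating the zigzag count via $z$-monodromies.} First I would recall (or establish) that a tetrahedral chain $\Theta_n$ is locally $z$-knotted for a face $F$ precisely when the $z$-monodromy $M_F$ is of a certain subset of the seven types (M1)--(M7), and more generally that the number of zigzags up to reversing passing through a face is determined by the cycle structure of $M_F$ as a permutation of the six oriented edges of $F$. Since every zigzag of $\Theta_n$ passes through \emph{every} face of the $3$-gonal bipyramid $\Theta_2$ (Example~\ref{ex2}) and, after gluing, through the newly exposed faces, the total number of zigzags of $\Theta_n$ up to reversing can be read off from the $z$-monodromy type of the last face glued on. This is where the bound ``at most $3$ zigzags up to reversing'' comes from: with six oriented edges the permutation $M_F$ has at most $3$ cycles up to the reversal symmetry. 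I would make this correspondence precise: $\Theta_n$ has $k$ zigzags up to reversing iff the active face has $z$-monodromy with the corresponding cycle type.

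\textbf{Stage 2: The transition rule on monodromy types.} Next I would describe what happens to the $z$-monodromy when a tetrahedron is glued to a face $F$ of type (M$i$): the face $F$ is subdivided into three new triangular faces, and I would compute, for each $i$, the multiset of $z$-monodromy types of those three new faces. This is a finite calculation (the partition operation on a triangle with a fixed $z$-monodromy, as mentioned in the introduction) and yields a digraph on $\{(\mathrm{M}1),\dots,(\mathrm{M}7)\}$ with an edge $(\mathrm{M}i)\to(\mathrm{M}j)$ whenever $(\mathrm{M}j)$ occurs among the three children of $(\mathrm{M}i)$. Since in the random construction one of the three new faces is chosen uniformly, each edge gets weight $\tfrac13$ times its multiplicity, giving a stochastic matrix $P$.

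\textbf{Stage 3: Ergodicity and the stationary distribution.} I would then check that the resulting Markov chain is irreducible and aperiodic (ergodic), so that $P^m$ converges to the rank-one matrix whose rows are the unique stationary distribution $\pi$, and the convergence is geometric with rate governed by the second-largest eigenvalue modulus of $P$ — this gives the ``exponential rate'' clause. Solving $\pi P = \pi$ is a small linear-algebra problem. Finally I would partition the seven states into the three classes $C_1, C_2, C_3$ according to how many zigzags up to reversing the corresponding chain has (Stage 1), and conclude
\[
\lim_{n\to\infty} p^{(k)}_n \;=\; \sum_{(\mathrm{M}i)\in C_k} \pi\bigl((\mathrm{M}i)\bigr),
\]
after checking that the initial steps (the tetrahedron, the bipyramid, the choice among four faces at step one) only affect the transient behaviour and not the limit; one must also verify the $k=1,2,3$ bookkeeping matches the claimed $\tfrac{8}{15}, \tfrac{2}{5}, \tfrac{1}{15}$ (note $\tfrac{8}{15}+\tfrac{6}{15}+\tfrac{1}{15}=1$).

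\textbf{Main obstacle.} The genuinely delicate step is Stage 2: correctly determining, for each of the seven monodromy types, the types of the three faces produced by gluing a tetrahedron. This requires tracking how zigzag segments through $F$ are rerouted through the three new faces and the three new interior edges, and it is easy to make sign/orientation errors. Everything after that — verifying ergodicity, diagonalizing a $7\times 7$ stochastic matrix, and summing entries of $\pi$ — is routine, though it must be done carefully to land on the stated fractions.
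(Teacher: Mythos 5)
Your plan coincides with the paper's proof: Stage 1 is Lemmas \ref{lem1}--\ref{lem2} and Proposition \ref{prop1} (every zigzag passes through each face coming from the last tetrahedron, so the zigzag count is read off from $|\mathcal{Z}(F)|\in\{2,4,6\}$ for the seven monodromy types, which also gives the bound of $3$), Stage 2 is the Section 6 lemma computing $\mathcal{M}_1,\dots,\mathcal{M}_7$ together with the weights $p_{ij}=\tfrac{m}{3}$, and Stage 3 is the ergodic finite Markov chain argument with stationary distribution $\pi=\tfrac{1}{15}[1,1,3,3,1,3,3]$, the class sums $\tfrac{8}{15},\tfrac{2}{5},\tfrac{1}{15}$, and the standard geometric convergence bound. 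The only part you leave unexecuted is the case-by-case orientation bookkeeping of Stage 2, which you correctly single out as the delicate core; the paper carries out those seven cases explicitly and obtains exactly the transition matrix your outline calls for.
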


\begin{exmp}\label{ex5}{\rm 
Consider a tetrahedral chain of length $5$ presented in Fig. 5. 
It contains precisely two vertices of degree $3$: one of them is a vertex of the first tetrahedron used in the construction and the other is a vertex of the last tetrahedron. 
These vertices are denoted by $a$ and $b$, see Fig. 5. 
\begin{center}
\begin{tikzpicture}[scale=0.8]

\draw[fill=black] (0,1.5) circle (2pt); 
\draw[fill=black] (0,-1) circle (2pt); 
\draw[fill=black] (-0.5,0) circle (2pt); 

\draw[fill=black] (-2,0.5) circle (2pt); 
\draw[fill=black] (1.5,0.5) circle (2pt); 
\draw[fill=black] (-2,-1.5) circle (2pt); 
\draw[fill=black] (1.5,-1.6) circle (2pt); 

\draw[fill=black] (2.5,-0.5) circle (2pt); 

\coordinate (A) at (0,1.5);
\coordinate (B) at (0,-1);
\coordinate (C) at (-0.5,0);
\coordinate (L) at (-2,0.5);
\coordinate (P) at (1.5,0.5);
\coordinate (LD) at (-2,-1.5);
\coordinate (PD) at (1.5,-1.6);

\coordinate (X) at (2.5,-0.5);

\draw[thick, dashed] (A)--(B);
\draw[thick] (A)--(C); 
\draw[thick] (B)--(C);

\draw[thick] (L)--(A);
\draw[thick, dashed] (L)--(B);
\draw[thick] (L)--(C);

\draw[thick] (P)--(A);
\draw[thick, dashed] (P)--(B);
\draw[thick] (P)--(C);

\draw[thick] (LD)--(B);
\draw[thick] (LD)--(C);
\draw[thick] (LD)--(L);

\draw[thick] (PD)--(B);
\draw[thick] (PD)--(C);
\draw[thick] (PD)--(P);

\draw[thick, dashed] (X)--(B);
\draw[thick] (X)--(P);
\draw[thick] (X)--(PD);

\node at (-2.2,-1.7) {$a$};
\node at (2.75,-0.5) {$b$};

\end{tikzpicture}
\captionof{figure}{ }
\end{center}
If $a$ is a vertex of the first tetrahedron, then the first four tetrahedra forms a tetrahedral chain isomorphic to (1) from Example \ref{ex4}. 
But if $b$ is a vertex of the first tetrahedron, then the first four tetrahedra forms a tetrahedral chain isomorphic to (3) from Example \ref{ex4}. 
This means that this tetrahedral chain can be obtained as a result of the construction in two different ways. 
}\end{exmp}
Example \ref{ex5} shows that the probability $p^{(k)}_n$ is not the probability that a tetrahedral chain randomly chosen from the family of all tetrahedral chains of length $n$ has precisely $k$ zigzags up to reversing (we assume that the probability of choice is equal for all tetrahedral chains from this family). 
To find the latter probability we need a classification of tetrahedral chains which is an open problem (see \cite{BJ} for a partial classification of some proper tetrahedral chains). 

\section{$Z$-monodromy}

Consider a face $F$ in a triangulation $\Gamma$ and denote its vertices by $a,b,c$. Then the set of all oriented edges of $F$ is 
$$\Omega(F)=\{ab,bc,ca,ac,cb,ba\},$$
where $xy$ is the edge from $x$ to $y$ for $x,y\in\{a,b,c\}$. If $e=xy$, then we write $-e$ for the edge $yx$. 
Let 
$$D_F=(ab,bc,ca)(ac,cb,ba),$$
i.e. $D_F$ is a permutation on $\Omega(F)$ which transfers each oriented edge of $F$ to the next edge according to one of the two orientations on the boundary of this face. 
Now, we define the permutation $M_F$ on $\Omega(F)$ called the {\it $z$-monodromy} of $F$. 
For any $e\in\Omega(F)$ we take $e_0\in\Omega(F)$ such that $D_F(e_0)=e$. 
Since every zigzag is completely determined by any pair of consecutive edges, we can find the zigzag containing the sequence $e_0,e$. 
The first element of $\Omega(F)$ contained in this zigzag after $e$ is denoted by $M_F(e)$. 
Note that if $M_F(e)=e'$, then $M_F(-e')=-e$. 

A special homeomorphism $g:\partial F\to\partial F'$ induces a bijection between $\Omega(F)$ and $\Omega(F')$ which sends an oriented edge $xy$ to the oriented edge $g(x)g(y)$. 
We denote this bijection also by $g$. 

There are precisely $7$ possibilities for the $z$-monodromy $M_F$ and each of them is realized (see \cite[Theorem 4.4]{PT2}):
\begin{enumerate}
\item[{\rm (M1)}] $M_{F}$ is the identity,
\item[{\rm (M2)}] $M_{F}=D_{F}$,
\item[{\rm (M3)}] $M_{F}=(-e_{1},e_{2},e_{3})(-e_{3},-e_{2},e_{1})$,
\item[{\rm (M4)}] $M_{F}=(e_{1},-e_{2})(e_{2},-e_{1})$, where {\rm}$e_{3}$ and $-e_{3}$ are fixed points{\rm},
\item[{\rm (M5)}] $M_{F}=(D_{F})^{-1}$,
\item[{\rm (M6)}] $M_{F}=(-e_{1},e_{3},e_{2})(-e_{2},-e_{3},e_{1})$,
\item[{\rm (M7)}] $M_{F}=(e_{1},e_{2})(-e_{2},-e_{1})$, where {\rm}$e_{3}$ and $-e_{3}$ are fixed points{\rm}
\end{enumerate}
where $(e_{1},e_{2},e_{3})$ is one of the cycles in $D_{F}$. 

The triangulation $\Gamma$ is locally $z$-knotted for $F$ if and only if one of the cases (M1)--(M4) is realized. 
By \cite[Theorem 4.7]{PT2}, a triangulation is $z$-knotted if and only if $z$-monodromies of all faces are of types (M1)--(M4). 
If $M_F$ is (M6) or (M7), then $|\mathcal{Z}(F)|=4$ and if $M_F$ is (M5), then $|\mathcal{Z}(F)|=6$ (see \cite[Remark 4.9]{PT2}). 
By Example \ref{ex1}, for a tetrahedron $T$ and for each face $F$ in $T$ we have $|\mathcal{Z}(F)|=6$, which means that the $z$-monodromies of all faces in $T$ are of type (M5). 

\begin{exmp}\label{ex6}{\rm 
Consider $3$-gonal bipyramid from Example \ref{ex2}. 
Let $F$ be the face containing the vertices $a,1,2$. 
The set of all oriented edges of $F$ is 
$$\Omega(F)=\{e_1,e_2,e_3,-e_3,-e_2,-e_1\},$$ 
where $e_1=12, e_2=2a, e_3=a1$ and $(e_1,e_2,e_3)$ is one of $3$-cycles in $D_F$. 
Thus, the zigzag passes through oriented edges of $F$ as follows
$$\dots,e_3,e_1,\dots,-e_3,-e_2,\dots,e_1,e_2,\dots$$
Therefore, 
$$M_F=(-e_1,e_2,e_3)(-e_3,-e_2,e_1),$$
i.e. the $z$-monodromy of $F$ is of type (M3). 
Since $F$ can be transferred to any other face of $BP_3$ by an automorphism, $z$-monodromies of all faces are of type (M3). 
}\end{exmp}

The following simple lemma will be applied later to tetrahedral chains. 
\begin{lemma}\label{lem1}
Suppose that $F$ is a face in a triangulation $\Gamma$ such that
$\mathcal{Z}(\Gamma)=\mathcal{Z}(F)$
and suppose that $T$ is a tetrahedron with the set of faces $\{F', F_1, F_2, F_3\}$.
Then, for any special homeomorphism $g:\partial F\to\partial F'$, we have
$$\mathcal{Z}(\Gamma\#_g T)=\mathcal{Z}(F_i)\text{ for }i=1,2,3.$$
\end{lemma}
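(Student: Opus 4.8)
The plan is to analyze how zigzags behave under the connected sum operation $\Gamma\#_g T$. First I would observe that, since a tetrahedral chain is a triangulation of $\mathbb{S}^2$ and in particular every zigzag is edge-simple (this can be extracted from the structure of the examples, or argued directly), the hypothesis $\mathcal{Z}(\Gamma)=\mathcal{Z}(F)$ means that every zigzag of $\Gamma$ passes through all three edges of $F$. When we glue the tetrahedron $T$ along $g:\partial F\to\partial F'$, the three edges of $F$ are identified with the three edges of $F'$, and the faces $F$ and $F'$ disappear; what remains are the other faces of $\Gamma$ together with $F_1,F_2,F_3$. The key point is that a zigzag of $\Gamma$, upon reaching a consecutive pair of oriented edges lying in the (now deleted) face $F$, must instead continue through one of $F_1,F_2,F_3$ — so zigzags of $\Gamma\#_g T$ are obtained by "rerouting" zigzags of $\Gamma$ and of $T$ through the new faces.

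The main step is a local computation with $z$-monodromies. By Example~\ref{ex1}, every face of $T$ has $z$-monodromy of type (M5), which by the remark after the classification means $|\mathcal{Z}(F')|=6$, i.e. $\mathcal{Z}(F')$ consists of three zigzags up to reversing, each passing through all four faces of $T$ — in particular through each of $F_1,F_2,F_3$. Thus I want to show: (i) every zigzag of $\Gamma\#_g T$ passes through each $F_i$, i.e. $\mathcal{Z}(\Gamma\#_g T)\subseteq \mathcal{Z}(F_i)$ for each $i$; and (ii) conversely $\mathcal{Z}(F_i)$ already accounts for all zigzags, so equality holds. For (i): take a zigzag $Z$ of $\Gamma\#_g T$ and follow it. Its edges are old edges of $\Gamma$ and old edges of $T$. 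If $Z$ only used edges of $T$ it would be a zigzag of $T$ and hence pass through all faces of $T$; otherwise $Z$ eventually crosses from $\Gamma$-edges to $T$-edges, and this crossing happens precisely at a consecutive pair sitting in the shared triangle (the identified $F=F'$), after which $Z$ enters the interior of $T$. Using that $\mathcal{Z}(\Gamma)=\mathcal{Z}(F)$, any portion of $Z$ inside $\Gamma$ must hit all edges of $F$; then I track how such a pair of $F$-edges forces $Z$ into $T$ and, using that $M_{F'}$ is of type (M5) and every zigzag of $T$ meets every face, conclude $Z$ meets each $F_i$. The cleanest way to package this is via the $z$-monodromy: the $z$-monodromy of $F'$ in $\Gamma\#_g T$ is unchanged from its value in $T$ (type (M5)) because $z$-monodromy of $F'$ only depends on the combinatorics of $T$, and a zigzag of the connected sum restricted to $T$ between two visits to $\partial F'=\partial F$ is governed exactly by $M_{F'}$; since (M5) has $|\mathcal{Z}(F')|=6$ and those zigzags each hit $F_1,F_2,F_3$, and every zigzag of $\Gamma\#_g T$ that touches $T$ does so via $F'$, we get that every zigzag touching $T$ hits all of $F_1,F_2,F_3$. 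Finally no zigzag can avoid $T$ entirely: a zigzag living purely in $\Gamma$-edges would have to be a zigzag of $\Gamma$, but every zigzag of $\Gamma$ uses all edges of $F$, and the pair of consecutive $F$-edges it contains forces continuation into $T$ — contradiction. Hence $\mathcal{Z}(\Gamma\#_g T)=\mathcal{Z}(F_i)$ for $i=1,2,3$.

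I expect the main obstacle to be the bookkeeping in step (i): carefully arguing that a zigzag of the connected sum, when it passes from $\Gamma$ into $T$, re-emerges governed by $M_{F'}$, and that this is enough to force it through all three new faces. Concretely the subtlety is that a single zigzag of $\Gamma\#_g T$ may alternate many times between $\Gamma$ and $T$, and one must check that each excursion into $T$ either already sweeps all of $F_1,F_2,F_3$ or is part of a longer pattern that does; here the fact that (M5) is $(D_F)^{-1}$, which acts transitively enough on $\Omega(F')$ that the induced "traversal" of $T$ visits every face, is what makes it work. An alternative, possibly slicker route is to invoke \cite[Theorems 4.4 and 4.7]{PT2} machinery: $\mathcal{Z}(\Gamma)=\mathcal{Z}(F)$ says $\Gamma$ is "locally $z$-knotted concentrated at $F$"; the gluing replaces the single face $F$ by $T$'s interior, and one checks face by face that the set of zigzags through $F_i$ is forced to coincide with the full zigzag set because every zigzag must cross the separating triangle $\partial F'$ and every face of $T$ lies on all six local zigzags of $F'$. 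Either way the heart is: zigzags cannot hide on one side of the gluing triangle, and on the $T$-side the (M5) monodromy spreads them over all of $F_1,F_2,F_3$.
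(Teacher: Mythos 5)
Your overall strategy is the same as the paper's: (a) show that every zigzag of $\Gamma\#_g T$ must contain one of the three edges $e_1,e_2,e_3$ of the gluing triangle, and (b) show that once a zigzag contains such an edge it is forced through all three new faces $F_1,F_2,F_3$. Step (a) in your write-up is essentially the paper's argument (a zigzag avoiding $e_1,e_2,e_3$ would either be a zigzag of $\Gamma$ missing $F$, contradicting $\mathcal{Z}(\Gamma)=\mathcal{Z}(F)$, or would consist only of the three new edges, which is impossible). For step (b), however, the paper does something much lighter than what you propose: it observes that $\Gamma\#_g T$ is just $\Gamma$ with the face $F$ starred (a new interior vertex coned to the three vertices of $F$), and then a zigzag arriving at $e_1$ is forced to continue as $e_1,e'_3,e'_1,e_3$ or $e_1,e'_2,e'_1,e_2$ (or a reverse), a four-edge computation whose three consecutive pairs lie in $F_1,F_2,F_3$ respectively. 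That is the entire proof.

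Your proposed route for (b) --- tracking excursions into $T$ via the (M5) monodromy of $F'$ and the zigzags of $T$ --- is exactly the part you defer as ``bookkeeping,'' and as stated it has a definitional problem: $F'$ is not a face of $\Gamma\#_g T$, so ``the $z$-monodromy of $F'$ in $\Gamma\#_g T$'' is not defined; making the excursion idea precise amounts to doing the four-edge computation anyway. Two of your supporting assertions are also false, although neither is needed: zigzags in triangulations of $\mathbb{S}^2$ need not be edge-simple (the single zigzag of $BP_3$ in Example \ref{ex2} traverses every edge twice), and $\mathcal{Z}(\Gamma)=\mathcal{Z}(F)$ only says that every zigzag contains \emph{at least one} edge of $F$, not all three. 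Likewise, a hypothetical zigzag of $\Gamma\#_g T$ using only edges of $T$ would not be ``a zigzag of $T$'' (again because $F'$ is gone), though that case turns out to be vacuous. So the skeleton is right, but the central step is left unproved and the machinery you reach for is both unnecessary and not quite well-posed; replace it with the direct local continuation argument.
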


\begin{proof}
Let $e_{1},e_{2},e_{3}$ be the non-oriented edges of $F$. 
The connected sum $\Gamma\#_g T$ can be obtained from $\Gamma$ by adding a vertex in the interior of $F$ and three edges joining this vertex to the vertices of $F$. 
We denote a new edge by $e'_i$ if this edge and $e_i$ do not have a common vertex, see Fig. 6. 
\begin{center}
\begin{tikzpicture}[scale=1.2]

\draw[fill=black] (0,0) circle (1.5pt);
\draw[fill=black] (-60:2cm) circle (1.5pt);
\draw[fill=black] (-120:2cm) circle (1.5pt);
\draw[thick, line width=1pt] (-60:2cm) -- (-120:2cm);

\draw[thick, line width=1pt] (-60:2cm) -- (0,0) -- (-120:2cm);

\draw [thick, line width=1pt] (-60:2cm) -- (0,0);

\draw [thick, line width=1pt] (0,0) -- (-120:2cm);

\draw [thick, line width=1pt] (-120:2cm) -- (-60:2cm);

\draw[thick, line width=1pt, dashed] (-90:1.1547cm) -- (0,0);

\draw [thick, line width=1pt] (-90:1.1547cm) -- (-120:2cm);

\draw [thick, line width=1pt] (-90:1.1547cm) -- (-60:2cm);

\draw [thick, line width=1pt] (-90:1.1547cm) -- (0,0);

\draw[fill=black] (-90:1.1547cm) circle (1.5pt);

\node[xshift=2] at (-135:1cm) {$e_3$};
\node[xshift=-2] at (-47:1cm) {$e_2$};
\node at (0,-1.9cm) {$e_1$};

\node at (-75:1.2cm) {$e'_3$};
\node[xshift=-1.5] at (-105:1.17cm) {$e'_2$};
\node[xshift=1, yshift=1] at (-80:0.8cm) {$e'_1$};

\end{tikzpicture}
\captionof{figure}{ }
\end{center}
So, $F$ is replaced by the faces $F_1, F_2, F_3$. 
Since all zigzags of $\Gamma$ contain edges of $F$, then each of zigzags of $\Gamma\#_g T$ passes through at least one of edges $e_1,e_2,e_3$ (otherwise, there is a zigzag containing only edges $e'_1, e'_2, e'_3$, which is impossible). 
Without loss of generality we assume that $Z\in\mathcal{Z}(\Gamma\#_g T)$ contains $e_1$. 
Then, $Z$ is 
$$\dots, e_1,e'_3,e'_1,e_3,\dots\text{ or }\dots,e_1,e'_2,e'_1,e_2,\dots$$
or a zigzag reversed to one of them. 
Therefore, $Z$ passes through edges of all faces $F_1, F_2, F_3$. 
\end{proof}

It was pointed in Example \ref{ex1} that each of zigzags of $\Theta_1=T$ passes through edges of all its faces. 
Using Lemma \ref{lem1}, we establish a similar property for tetrahedral chains of length $n\geq 2$. 

\begin{lemma}\label{lem2}
Let $n\geq 2$ and $\{\Theta_k\}_{k=1}^{n-1}$ be a sequence of tetrahedral chains, i.e. 
$\Theta_k$ is obtained by gluing a tetrahedron to a face of $\Theta_{k-1}$ that comes from a tetrahedron glued in the previous step. 
Let also $\{F',F_1,F_2,F_3\}$ be the set of faces of $T^{(n)}$ and let $g_{n-1}:\partial F\to F'$ be any special homeomorphism such that $F$ is any face of $\Theta_{n-1}$ that comes from $T^{(n-1)}$. 
If $\Theta_n=\Theta_{n-1}\#_{g_{n-1}}T^{(n)}$, then
$$\mathcal{Z}(\Theta_{n})=\mathcal{Z}(F_i)\text{ for }i=1,2,3.$$
\end{lemma}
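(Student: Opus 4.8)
The plan is to prove Lemma \ref{lem2} by induction on $n$, using Lemma \ref{lem1} as the base case and the inductive step simultaneously. The key observation is that the hypothesis of Lemma \ref{lem1} — namely $\mathcal{Z}(\Gamma)=\mathcal{Z}(F)$ for the face $F$ to which we glue — must be verified at each stage, and this is precisely what the conclusion of Lemma \ref{lem1} (together with the tetrahedron base case from Example \ref{ex1}) provides. So the structure is: show that at every step $k$ the face of $\Theta_k$ coming from the most recently glued tetrahedron $T^{(k)}$ satisfies the "all zigzags pass through it" property, and then Lemma \ref{lem1} applies verbatim at the next step.

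First I would set up the induction. For $n=2$: by Example \ref{ex1}, every zigzag of $\Theta_1=T^{(1)}$ passes through all faces of $T^{(1)}$, in particular through the chosen face $F$; hence $\mathcal{Z}(\Theta_1)=\mathcal{Z}(F)$, and Lemma \ref{lem1} gives $\mathcal{Z}(\Theta_2)=\mathcal{Z}(F_i)$ for the three new faces $F_i$ of $T^{(2)}$. For the inductive step, suppose the statement holds for $n-1$, so that $\mathcal{Z}(\Theta_{n-1})=\mathcal{Z}(F_i)$ where the $F_i$ range over the three faces of $\Theta_{n-1}$ coming from $T^{(n-1)}$. Now $F$ in the statement of Lemma \ref{lem2} is one such face, so $\mathcal{Z}(\Theta_{n-1})=\mathcal{Z}(F)$. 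Apply Lemma \ref{lem1} with $\Gamma=\Theta_{n-1}$ and this $F$: we obtain $\mathcal{Z}(\Theta_n)=\mathcal{Z}(\Theta_{n-1}\#_{g_{n-1}}T^{(n)})=\mathcal{Z}(F_i)$ for $i=1,2,3$, where now $F_1,F_2,F_3$ are the faces of $T^{(n)}$ other than $F'$. This closes the induction.

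The only subtlety — and the one point that deserves a sentence of care rather than a routine wave — is to make sure the face $F$ appearing in the hypothesis of Lemma \ref{lem1} at step $n$ is genuinely one of the faces for which the inductive hypothesis asserts $\mathcal{Z}(\Theta_{n-1})=\mathcal{Z}(F_i)$. This is guaranteed by the standing requirement in the definition of a tetrahedral chain that $q_{n-2}\neq p_{n-1}$: the face of $\Theta_{n-1}$ chosen for the next gluing comes from $T^{(n-1)}$ and is distinct from the face $F^{(n-1)}_{q_{n-2}}$ that was used to attach $T^{(n-1)}$ to $\Theta_{n-2}$. In other words, $F$ is one of the three "free" faces of $T^{(n-1)}$ inside $\Theta_{n-1}$, which is exactly the set of faces covered by the inductive hypothesis. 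There is essentially no real obstacle here; the whole content of the lemma is already packaged in Lemma \ref{lem1}, and the proof is a bookkeeping induction.

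\begin{proof}
We argue by induction on $n$. For $n=2$, Example \ref{ex1} shows that every zigzag of $\Theta_1=T^{(1)}$ passes through edges of all four faces of $T^{(1)}$; in particular $\mathcal{Z}(\Theta_1)=\mathcal{Z}(F)$ for the face $F$ coming from $T^{(1)}$. Applying Lemma \ref{lem1} with $\Gamma=\Theta_1$ and $T=T^{(2)}$, we obtain $\mathcal{Z}(\Theta_2)=\mathcal{Z}(F_i)$ for $i=1,2,3$, where $F_1,F_2,F_3$ are the faces of $T^{(2)}$ other than $F'$.

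Assume $n\geq 3$ and that the statement holds for $n-1$; thus
$\mathcal{Z}(\Theta_{n-1})=\mathcal{Z}(F_i)$
for each of the three faces $F_i$ of $\Theta_{n-1}$ coming from $T^{(n-1)}$. By the defining condition $q_{n-2}\neq p_{n-1}$ of a tetrahedral chain, the face $F$ of $\Theta_{n-1}$ to which $T^{(n)}$ is glued comes from $T^{(n-1)}$ and is distinct from the face used to attach $T^{(n-1)}$; hence $F$ is one of these three faces and $\mathcal{Z}(\Theta_{n-1})=\mathcal{Z}(F)$. Applying Lemma \ref{lem1} with $\Gamma=\Theta_{n-1}$ and $T=T^{(n)}$, we conclude that
$$\mathcal{Z}(\Theta_n)=\mathcal{Z}(\Theta_{n-1}\#_{g_{n-1}}T^{(n)})=\mathcal{Z}(F_i)\text{ for }i=1,2,3,$$
where $F_1,F_2,F_3$ are the faces of $T^{(n)}$ other than $F'$. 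This completes the induction.
\end{proof}
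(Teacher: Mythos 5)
Your proof is correct and follows essentially the same route as the paper: an induction whose inductive step is exactly one application of Lemma \ref{lem1}, with the condition $q_{n-2}\neq p_{n-1}$ guaranteeing that the glued face is covered by the inductive hypothesis. The only cosmetic difference is that you derive the base case $n=2$ from Example \ref{ex1} plus Lemma \ref{lem1}, whereas the paper simply cites the $z$-knottedness of $BP_3$ from Example \ref{ex2}; both are fine.
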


\begin{proof}
We prove the lemma by induction. 
The tetrahedral chain $\Theta_2=BP_3$ is $z$-knotted (see Example \ref{ex2}), so all its zigzags passes through edges of all faces. 
Suppose that $\Theta_{n-1}$ was obtained by gluing $\Theta_{n-2}$ and $T^{(n-1)}$ together and let $\mathcal{F}$ be the set consisting of three faces of $\Theta_{n-1}$ that come from $T^{(n-1)}$. 
Assume that $\mathcal{Z}(\Theta_{n-1})=\mathcal{Z}(F)$ for all $F\in\mathcal{F}$. 
If we set $F\in\mathcal{F}$, then for any special homeomorphism ${g_{n-1}:\partial F\to\partial F'}$ we obtain 
$\Theta_n=\Theta_{n-1}\#_{g_{n-1}}T^{(n)}$ ,
where $F$ is replaced by $F_1,F_2,F_3$. 
By Lemma \ref{lem1}, $\mathcal{Z}(\Theta_{n})=\mathcal{Z}(F_i)$ for all $i=1,2,3$. 
\end{proof}

Example \ref{ex1} and Lemma \ref{lem2} shows that any tetrahedral chain contains at most $3$ zigzags up to reversing. 
Lemma \ref{lem2} together with the properties $z$-monodromies presented at the beginning of this section imply the following. 

\begin{prop}\label{prop1}
Let $\Theta_n$ and $\Theta_{n-1}$ be as in Lemma \ref{lem2}. 
For every face $F$ of $\Theta_n$ that comes from $T^{(n)}$ the following assertions are fulfilled: 
\begin{enumerate}
\item[(1)] if $M_F$ is of type {\rm (M1)}--{\rm (M4)}, then $\Theta_n$ is $z$-knotted, 
\item[(2)] if $M_F$ is of type {\rm (M6)} or {\rm (M7)}, then $\Theta_n$ contains precisely $2$ zigzags up to reversing, 
\item[(3)] if $M_F$ is of type {\rm (M5)}, then $\Theta_n$ contains precisely $3$ zigzags up to reversing.
\end{enumerate}
\end{prop}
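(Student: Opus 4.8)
The plan is to obtain the statement as an immediate consequence of Lemma~\ref{lem2} and the facts about $z$-monodromies recalled at the beginning of this section, with no further geometric input required.

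First I would note that of the four faces $F',F_1,F_2,F_3$ of $T^{(n)}$, only $F_1,F_2,F_3$ remain faces of the connected sum $\Theta_n=\Theta_{n-1}\#_{g_{n-1}}T^{(n)}$, since $F'$ is glued to the face of $\Theta_{n-1}$ and therefore ceases to be a face. Thus the faces of $\Theta_n$ that come from $T^{(n)}$ are exactly $F_1,F_2,F_3$, and Lemma~\ref{lem2} gives $\mathcal{Z}(\Theta_n)=\mathcal{Z}(F)$ for each of them. In particular $|\mathcal{Z}(\Theta_n)|=|\mathcal{Z}(F)|$, so the number of zigzags of $\Theta_n$ up to reversing is determined by $|\mathcal{Z}(F)|$, which in turn is determined by the type of the $z$-monodromy $M_F$ (computed in $\Theta_n$) via the classification (M1)--(M7).

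It then remains to read off the three cases. If $M_F$ is of type (M1)--(M4), then $\Theta_n$ is locally $z$-knotted for $F$, so $|\mathcal{Z}(\Theta_n)|=|\mathcal{Z}(F)|=2$ and $\Theta_n$ is $z$-knotted, which is (1). If $M_F$ is of type (M6) or (M7), then $|\mathcal{Z}(F)|=4$, hence $|\mathcal{Z}(\Theta_n)|=4$ and $\Theta_n$ has precisely $2$ zigzags up to reversing, which is (2). If $M_F$ is of type (M5), then $|\mathcal{Z}(F)|=6$, hence $|\mathcal{Z}(\Theta_n)|=6$ and $\Theta_n$ has precisely $3$ zigzags up to reversing, which is (3).

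Since the two substantive ingredients---the equality $\mathcal{Z}(\Theta_n)=\mathcal{Z}(F)$ from Lemma~\ref{lem2} and the correspondence between the type of $M_F$ and the value $|\mathcal{Z}(F)|$ from \cite{PT2}---are already in hand, I do not expect a genuine obstacle here. The only point calling for a little care is the bookkeeping of which faces of $T^{(n)}$ survive in $\Theta_n$, so that Lemma~\ref{lem2} is applied to the right faces; one may also check the base situation $n=2$, where $\Theta_2=BP_3$ has every $z$-monodromy of type (M3) (see Example~\ref{ex6}), consistently with~(1).
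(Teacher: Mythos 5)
Your argument is exactly the one the paper intends: it states Proposition~\ref{prop1} as an immediate consequence of Lemma~\ref{lem2} combined with the facts that a triangulation is locally $z$-knotted for $F$ precisely in cases (M1)--(M4), that $|\mathcal{Z}(F)|=4$ for (M6)--(M7), and that $|\mathcal{Z}(F)|=6$ for (M5). Your bookkeeping of which faces of $T^{(n)}$ survive in $\Theta_n$ and the case-by-case reading of $|\mathcal{Z}(\Theta_n)|=|\mathcal{Z}(F)|$ are correct and complete.
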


\section{The digraph of $z$-monodromies}

Consider a triangulation $\Gamma$ and a face $F$ in this triangulation. 
We triangulate $F$ as in the proof of Lemma \ref{lem1}, i.e. we add a vertex in its interior and three edges joining this vertex to the vertices of $F$. 
We obtain a new triangulation $\Gamma'$. 
Since the structure of $\Gamma'$ outside the three new faces is not changed, the $z$-monodromies of these faces depend only on the type of $M_F$ and do not depend on the choice of $\Gamma$. 
If $M_F$ is of type (M$i$), then we denote by $\mathcal{M}_i$ the set of types of $z$-monodromies of faces obtained from $F$ by the above operation. 

Using this observation, we define a digraph G whose vertex set consists of all seven types of $z$-monodromies (M1)--(M7). 
The digraph G contains a directed edge from (M$i$) to (M$j$) if ${\rm (M}j{)}\in\mathcal{M}_i$. 
Note that G has loops if there exists $i\in\{1,\dots,7\}$ such that ${\rm (M}i{)}\in\mathcal{M}_i$. 

\begin{lemma}
The following assertions are fulfilled:
\begin{enumerate}
\item $\mathcal{M}_1=\{{\rm (M4)}\}$, 
\item $\mathcal{M}_2=\{{\rm (M5)}\}$,
\item $\mathcal{M}_3=\{{\rm (M6)}, {\rm (M7)}\}$ (the $z$-monodromies of two faces are of type ${\rm (M7)}$ and the $z$-monodromy of the remaining face is of type ${\rm (M6)}$),
\item $\mathcal{M}_4=\{{\rm (M1)}, {\rm (M3)}\}$ (the $z$-monodromies of two faces are of type ${\rm (M3)}$ and the $z$-monodromy of the remaining face is of type ${\rm (M1)}$),
\item $\mathcal{M}_5=\{{\rm (M3)}\}$,
\item $\mathcal{M}_6=\{{\rm (M2)}, {\rm (M4)}\}$ (the $z$-monodromies of two faces are of type ${\rm (M4)}$ and the $z$-monodromy of the remaining face is of type ${\rm (M2)}$),
\item $\mathcal{M}_7=\{{\rm (M6)}, {\rm (M7)}\}$ (the $z$-monodromies of two faces are of type ${\rm (M6)}$ and the $z$-monodromy of the remaining face is of type ${\rm (M1)}$).
\end{enumerate}
\end{lemma}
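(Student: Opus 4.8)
The plan is to compute, for each type (M$i$), the explicit permutation $M_{F_1}, M_{F_2}, M_{F_3}$ on the three new faces obtained by inserting a central vertex into $F$. The key observation, already noted in the text, is that this computation is local: the three new faces $F_1,F_2,F_3$ share only the central vertex and the new edges $e'_1,e'_2,e'_3$ with each other, and the part of the triangulation lying outside $F$ is irrelevant except through the way zigzags enter and leave $F$ along its three boundary edges $e_1,e_2,e_3$. So the first step is to fix notation exactly as in the proof of Lemma~\ref{lem1}: label the boundary edges $e_1,e_2,e_3$ of $F$, let $(e_1,e_2,e_3)$ be one cycle of $D_F$, and let $e'_i$ be the new edge disjoint from $e_i$. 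Orienting everything consistently, I record how a zigzag segment that traverses two consecutive boundary edges of $F$ (that is, a pair $e_0,e$ with $D_F(e_0)=e$, giving the value $M_F(e)$) is modified once the central vertex is present: locally it becomes a segment of the form $\dots,e_1,e'_3,e'_1,e_3,\dots$ or $\dots,e_1,e'_2,e'_1,e_2,\dots$ and their reverses, exactly the two local pictures displayed in the proof of Lemma~\ref{lem1}.

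The second step is the actual case analysis. For each $i\in\{1,\dots,7\}$ I take the normal form of $M_F$ from the list (M1)--(M7), feed each oriented boundary edge through the "enter $F$, cross the center, leave $F$" rule to obtain the trajectory of a zigzag inside the new configuration, and then read off the three new $z$-monodromies $M_{F_1},M_{F_2},M_{F_3}$ directly from their definitions as permutations of $\Omega(F_1),\Omega(F_2),\Omega(F_3)$ respectively. Concretely, the new face $F_j$ (say the one bounded by $e_j$ and the two edges $e'_k$, $e'_l$ adjacent to $e_j$'s endpoints) has six oriented edges, and I must determine for each its image under the local transition; composing with the outside behavior encoded by $M_F$ gives $M_{F_j}$, which I then match against the seven normal forms. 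Each of the seven cases reduces to a short finite computation; symmetry among $e_1,e_2,e_3$ (cyclic for (M1),(M2),(M3),(M5),(M6), and a reflection fixing $e_3$ for (M4),(M7)) cuts the work and explains why in cases (3),(4),(6),(7) two of the three faces get the same type while the third differs.

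The main obstacle, and the only part that is not purely mechanical, is bookkeeping the orientations correctly: the $z$-monodromy is a permutation of \emph{oriented} edges, the local crossing rule must be applied with the correct choice between the $e'_3,e'_1$ branch and the $e'_2,e'_1$ branch (which depends on the orientation with which the zigzag enters $F$), and one must also use the identity $M_F(e)=e'\Rightarrow M_F(-e')=-e$ together with its analogue for the $M_{F_j}$ to keep the computation consistent. It is here — in cases (M3), (M6), (M7) — that the asymmetry between an (M6)/(M7) outcome and an (M1) outcome arises, and care is needed to see that the "exceptional" face is the one whose boundary edge is the fixed point (or the distinguished edge) of the permutation $M_F$. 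Once the orientations are pinned down, each assertion (1)--(7) follows by direct verification against the classification (M1)--(M7); a sanity check is that the multiset of new types is compatible with the global zigzag count via Proposition~\ref{prop1} (e.g. gluing onto an (M2) face of a tetrahedron, whose faces are all (M5), must keep the chain at three zigzags, consistent with $\mathcal{M}_2=\{(\mathrm{M5})\}$, and gluing onto the (M3) faces of $BP_3$ must drop the count to two, consistent with $\mathcal{M}_3=\{(\mathrm{M6}),(\mathrm{M7})\}$).
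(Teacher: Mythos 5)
Your plan coincides with the paper's proof: the paper fixes exactly the notation of Lemma \ref{lem1} (oriented new edges $e'_i$ pointing to the central vertex), traces the zigzag(s) through the three subdivided faces for each normal form (M1)--(M7) via the local crossing rule, reads off the permutations $M_{F_j}$, renames oriented edges to match the classification, and uses the same symmetry reductions you describe to compute only one face per symmetry orbit. One small correction to your bookkeeping: the symmetry that pairs up two of the three new faces in cases (M3) and (M6) is a reflection fixing $e_1$ (the edge distinguished by those normal forms), not a cyclic symmetry --- a cyclic symmetry would force all three faces to receive the same type, contradicting the $2{+}1$ split you correctly predict for those cases.
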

Therefore, G is the digraph presented in Fig. 7:
\begin{center}
\begin{tikzpicture}[scale=2]
\begin{scope}[every node/.style={circle,thick,draw}]
    \node (M1) at (3.5,0) {M1};
    \node (M2) at (-1,-1) {M2};
    \node (M3) at (1,1) {M3};
    \node (M4) at (2,0) {M4};
    \node (M5) at (-1,1) {M5};
    \node (M6) at (1,-1) {M6} ;
    \node (M7) at (0,0) {M7} ;
\end{scope}

\begin{scope}[{>=stealth},
              every node/.style={fill=white,circle, minimum size = 0.1cm, inner sep=2pt},
              every edge/.style={draw=black,very thick}]
    \path [->] (M1) edge [bend right] node {$1$} (M4);
    \path [->] (M2) edge node {$1$} (M5);
    \path [->] (M3) edge node {$\frac{1}{3}$} (M6);
    \path [->] (M3) edge node {$\frac{2}{3}$} (M7);
    \path [->] (M4) edge [bend right] node {$\frac{1}{3}$} (M1);
    \path [->] (M4) edge node {$\frac{2}{3}$} (M3);
    \path [->] (M5) edge node {$1$} (M3);
    \path [->] (M6) edge node {$\frac{1}{3}$} (M2);
    \path [->] (M6) edge node {$\frac{2}{3}$} (M4);
    \path [->] (M7) edge node {$\frac{2}{3}$} (M6);
    \path [->] (M7) edge[out=135,in=225,looseness=8] node {$\frac{1}{3}$} (M7); 
\end{scope}
\end{tikzpicture}
\captionof{figure}{ }
\end{center}
We assign labels to edges of G according to the following rule: 
if $m$ of three faces obtained by triangulating a face with the $z$-monodromy (M$i$) are of type (M$j$), then the edge from (M$i$) to (M$j$) is labelled by $p_{ij}=\frac{m}{3}$ (see Fig. 7). 
For tetrahedral chains, $p_{ij}$ is the probability that after gluing a tetrahedron to a face of $\Theta_{n-1}$ with the $z$-monodromy (M$i$) (where $n\geq 2$) we choose a face in $\Theta_n$ that comes from the tetrahedron such that its $z$-monodromy is (M$j$). 
This face will be used in the next step to construct $\Theta_{n+1}$.

\begin{proof}
Let $\Omega(F)=\{e_1,e_2,e_3,-e_3,-e_2,-e_1\},$ where $(e_1,e_2,e_3)$ is one of $3$-cycles in $D_F$. 
Every oriented edge added after triangulating $F$ is denoted by $e'_i$ if it does not have a common vertex with $e_i$ and it is oriented from a vertex coming from $F$ to the new vertex (see Fig. 8). 
If we change its orientation, then we get the edge $-e'_i$. 
The face obtained by triangulating $F$ whose one of oriented edges is $e_i$ will be denoted by $F_i$. 
\begin{center}
\begin{tikzpicture}[scale=1.4]

\begin{scope}
\draw[fill=black] (0,0) circle (1.5pt);
\draw[fill=black] (-60:2cm) circle (1.5pt);
\draw[fill=black] (-120:2cm) circle (1.5pt);
\draw[thick, line width=1pt] (-60:2cm) -- (-120:2cm);

\draw[thick, line width=1pt] (-60:2cm) -- (0,0) -- (-120:2cm);

\draw [thick, dashed, decoration={markings,
mark=at position 0.55 with {\arrow[scale=2,>=stealth]{<}}},
postaction={decorate}] (0,0) -- (-60:2cm);

\draw [thick, dashed, decoration={markings,
mark=at position 0.55 with {\arrow[scale=2,>=stealth]{<}}},
postaction={decorate}] (-120:2cm) -- (0,0);

\draw [thick, dashed, decoration={markings,
mark=at position 0.55 with {\arrow[scale=2,>=stealth]{<}}},
postaction={decorate}] (-60:2cm) -- (-120:2cm);

\node[xshift=2] at (-135:1.2cm) {$e_3$};
\node[xshift=-2] at (-47:1.2cm) {$e_2$};
\node at (0,-2cm) {$e_1$};

\end{scope}

\begin{scope}[xshift=4.5cm]
\draw[fill=black] (0,0) circle (1.5pt);
\draw[fill=black] (-60:2cm) circle (1.5pt);
\draw[fill=black] (-120:2cm) circle (1.5pt);
\draw[thick, line width=1pt] (-60:2cm) -- (-120:2cm);

\draw[thick, line width=1pt] (-60:2cm) -- (0,0) -- (-120:2cm);

\draw [thick, dashed, decoration={markings,
mark=at position 0.55 with {\arrow[scale=2,>=stealth]{<}}},
postaction={decorate}] (0,0) -- (-60:2cm);

\draw [thick, dashed, decoration={markings,
mark=at position 0.55 with {\arrow[scale=2,>=stealth]{<}}},
postaction={decorate}] (-120:2cm) -- (0,0);

\draw [thick, dashed, decoration={markings,
mark=at position 0.55 with {\arrow[scale=2,>=stealth]{<}}},
postaction={decorate}] (-60:2cm) -- (-120:2cm);






\draw [thick, dashed, decoration={markings,
mark=at position 0.55 with {\arrow[scale=2,>=stealth]{<}}},
postaction={decorate}] (-90:1.1547cm) -- (-120:2cm);

\draw [thick, dashed, decoration={markings,
mark=at position 0.55 with {\arrow[scale=2,>=stealth]{<}}},
postaction={decorate}] (-90:1.1547cm) -- (-60:2cm);

\draw [thick, dashed, decoration={markings,
mark=at position 0.55 with {\arrow[scale=2,>=stealth]{<}}},
postaction={decorate}] (-90:1.1547cm) -- (0,0);

\draw [thick, line width=1pt] (-90:1.1547cm) -- (-120:2cm);

\draw [thick, line width=1pt] (-90:1.1547cm) -- (-60:2cm);

\draw [thick, line width=1pt] (-90:1.1547cm) -- (0,0);

\draw[fill=black] (-90:1.1547cm) circle (1.5pt);

\node[xshift=2] at (-135:1.2cm) {$e_3$};
\node[xshift=-2] at (-47:1.2cm) {$e_2$};
\node at (0,-2cm) {$e_1$};

\node[xshift=0.15cm, yshift=-0.05cm] at (-75:1.2cm) {$e'_3$};
\node[xshift=-0.15cm, yshift=-0.05cm] at (-105:1.17cm) {$e'_2$};
\node[xshift=1, yshift=1] at (-80:0.8cm) {$e'_1$};
\end{scope}

\draw [xshift=1.75cm, ->]  (-0.25cm,-0.75) -- (1.25cm,-0.75);

\end{tikzpicture}
\captionof{figure}{ }
\end{center}

(1). Suppose that the $z$-monodromy of $F$ is of type (M1), i.e. $M_F$ is the identity. 
Thus, there is a single zigzag (up to reversing) which passes through edges of faces $F_i$ as follows:
$$\dots, e_1, e'_3,-e'_1,e_3,\dots,e_3,e'_2,-e'_3,e_2,\dots, e_2,e'_1,-e'_2,e_1,\dots$$
Note that for each $i=1,2,3$ this zigzag passes through the edges of $F_i$ and it can be written as
$$\dots, e_i, D_{F_i}(e_i),\dots, -(D_{F_i})^{-1}(e_i),-D_{F_i}(e_i),\dots,(D_{F_i})^{-1}(e_i),e_i,\dots$$
Thus, it is sufficient to find the type of $M_{F_1}$ and the $z$-monodromies of $F_2$ and $F_3$ are of the same type. 
For $F_1$ the zigzag is
$$\dots, e_1,e'_3,\dots, e'_2,-e'_3,\dots,-e'_2,e_1,\dots,$$
so $M_{F_1}=(e'_3,e'_2)(-e'_2,-e'_3)$. 
If we rename oriented edges of $F_1$ as below
$$e_1=E_3, e'_3=E_1, -e'_2=E_2,$$
then $(E_1,E_2,E_3)$ is one of the cycles in $D_{F_1}$ and $M_{F_1}=(E_1,-E_2)(E_2,-E_1)$. 
Therefore, $M_{F_i}$ is of type (M4) for $i=1,2,3$ and $\mathcal{M}_1=\{{\rm (M4)}\}$. 

(2). Let the $z$-monodromy of $F$ be of type (M2), i.e. $M_F=D_F$. 
There are precisely three zigzags (up to reversing) passing through edges of faces $F_i$: 
$$\dots,e_1,e'_3,-e'_1,e_3,\dots\text{ and }\dots,e_2,e'_1,-e'_2,e_1,\dots\text{ and }\dots,e_3,e'_2,-e'_3,e_2,\dots$$
Observe that each of the faces $F_1,F_2,F_3$ occurs in each of these zigzags. 
In other words, $|\mathcal{Z}(F_i)|=6$ and $M_{F_i}=(D_{F_i})^{-1}$ for every $i=1,2,3$. 
We establish that $\mathcal{M}_2=\{{\rm (M5)}\}$. 

(3). Assume that the $z$-monodromy of $F$ is of type (M3). 
If 
$$M_{F}=(-e_{1},e_{2},e_{3})(-e_{3},-e_{2},e_{1}),$$ 
then there are precisely two zigzags (up to reversing) which pass through edges of faces $F_i$:
$$\dots, e_1,e'_3,-e'_1,e_3,\dots,-e_1,e'_2,-e'_1,-e_2,\dots\text{ and }\dots,-e_2,e'_3,-e'_2,-e_3,\dots$$
For $F_1$ these sequences reduce to 
$$\dots,e_1,e'_3,\dots,-e_1,e'_2,\dots\text{ and }\dots,e'_3,-e'_2,\dots,$$
so $M_{F_1}=(-e_1,-e'_2,e'_3)(-e'_3,e'_2,e_1)$. 
By renaming the oriented edges of $F_1$ as below
$$e_1=E_1, e'_3=E_2, -e'_2=E_3,$$
we get $M_{F_1}=(-E_1,E_3,E_2)(-E_2,-E_3,E_1)$ where $(E_1,E_2,E_3)$ is one of the cycles in $D_{F_1}$, i.e. $M_{F_1}$ is of type (M6). 
For $i=2,3$ the zigzags pass through edges of $F_i$ as follows:
$$\dots,(D_{F_i})^{-1}(x),x,\dots,D_{F_i}(x),(D_{F_i})^{-1}(x),\dots\text{ and }\dots,x,D_{F_i}(x),\dots,$$
where $x=-e_2$ if $i=2$ and $x=e_3$ if $i=3$. 
Thus, $M_{F_2}$ and $M_{F_3}$ are of the same type and it is sufficient to find the $z$-monodromy of one of faces $F_2, F_3$. 
The zigzags passing through edges of $F_2$ are
$$\dots,e'_3,-e'_1\dots,-e'_1,-e_2\dots\text{ and }\dots,-e_2,e'_3,\dots$$
and $M_{F_2}=(-e'_3,e_2)(-e_2,e'_3)$. 
If
$$e_2=E_2,e'_1=E_3,-e'_3=E_1,$$
then $(E_1,E_2,E_3)$ is one of the cycles in $D_{F_2}$ and $M_{F_2}=(E_1,E_2)(-E_2,-E_1)$, i.e. the $z$-monodromies of $F_2$ and $F_3$ are of type (M7). 
Therefore, $\mathcal{M}_3=\{{\rm (M6)}, {\rm (M7)}\}$. 

(4). Suppose that the $z$-monodromy of $F$ is of type (M4). 
If 
$$M_{F}=(e_{1},-e_{2})(e_{2},-e_{1}),$$ 
then there is a single zigzag (up to reversing) passing through edges of faces $F_i$: 
$$\dots,e_1,e'_3,-e'_1,e_3,\dots,e_3,e'_2,-e'_3,e_2,\dots,-e_1,e'_2,-e'_1,-e_2,\dots$$
For $i=1,2$ the zigzag passes through edges of $F_i$ as follows: 
$$\dots,x,D_{F_i}(x),\dots,-(D_{F_i})^{-1}(x),-D_{F_i}(x),\dots,-x,-(D_{F_i})^{-1}(x),\dots,$$
where $x=e_1$ if $i=1$ and $x=-e'_3$ if $i=2$. 
Thus, the $z$-monodromies of $F_1$ and $F_2$ are of the same type. 
The zigzag passing through edges of $F_1$ is
$$\dots,e_1,e'_3,\dots,e'_2,-e'_3,\dots,-e_1,e'_2,\dots$$
and $M_{F_1}=(e'_2,e_1,e'_3)(-e'_3,-e_1,-e'_2)$. 
As previous, we change the notation
$$e_1=E_2, e'_3=E_3, -e'_2=E_1$$
and we get $M_{F_1}=(-E_1,E_2,E_3)(-E_3,-E_2,E_1)$ where $(E_1,E_2,E_3)$ is one of the cycles in $D_{F_1}$. 
So, $M_{F_1}$ and $M_{F_2}$ are of type (M3). 
Now, consider $F_3$. The zigzag passes through edges of this face as follows
$$\dots,-e'_1,e_3,\dots,e_3,e'_2,\dots,e'_2,-e'_1,\dots,$$
so $M_{F_3}$ is the identity (the type (M1)). 
Therefore $\mathcal{M}_4=\{{\rm (M1)}, {\rm (M3)}\}$. 

(5). Let the $z$-monodromy of $F$ be of type (M5), i.e. $M_F=(D_F)^{-1}$. 
Then there is a single zigzag (up to reversing) passing through edges of faces $F_i$:
$$\dots,e_1,e'_3,-e'_1,e_3,\dots,e_2,e'_1,-e'_2,e_1,\dots,e_3,e'_2,-e'_3,e_2,\dots$$
Note that for each $i=1,2,3$ the zigzag reduces to
$$\dots,e_i,D_{F_i}(e_i),\dots,(D_{F_i})^{-1}(e_i),e_i,\dots,-(D_{F_i})^{-1}(e_i),-D_{F_i}(e_i),\dots$$
and the $z$-monodromies of faces $F_1,F_2,F_3$ are of the same type. 
The edges of $F_1$ occur in this zigzag as follows
$$\dots,e_1,e'_3,\dots,-e'_2,e_1,\dots,e'_2,-e'_3,\dots$$
and $M_{F_1}=(-e_1,e'_3,-e'_2)(e'_2,-e'_3,e_1)$. 
We rename the oriented edges of $F_1$ as below
$$e_1=E_1,e'_3=E_2,-e'_2=E_3$$
and we get $M_{F_1}=(-E_1,E_2,E_3)(-E_3,-E_2,E_1)$ where $(E_1,E_2,E_3)$ is one of the cycles in $D_{F_1}$. 
Thus $M_{F_i}$ is of type (M3) for $i=1,2,3$ and $\mathcal{M}_5=\{{\rm (M3)}\}$. 

(6). Assume that the $z$-monodromy of $F$ is of type (M6). 
If 
$$M_{F}=(-e_{1},e_{3},e_{2})(-e_{2},-e_{3},e_{1}),$$ 
then there is a single zigzag (up to reversing) which passes through edges of faces $F_i$: 
$$\dots,e_1,e'_3,-e'_1,e_3,\dots,e_2,e'_1,-e'_2,e_1,\dots,-e_2,e'_3,-e'_2,-e_3,\dots$$ 
For $F_1$ this zigzag reduces to 
$$\dots,e_1,e'_3,\dots,-e'_2,e_1,\dots,e'_3,-e'_2,\dots$$
and we establish that the $z$-monodromy of $F_1$ is $M_{F_1}=(e_1,e'_3,-e'_2)(e'_2,-e'_3,-e_1)$. 
If 
$$e_1=E_1,e'_3=E_2,-e'_2=E_3,$$
then $(E_1,E_2,E_3)$ is one of the cycles in $D_{F_1}$ and $M_{F_1}=(E_1,E_2,E_3)(-E_3,-E_2,-E_1)=D_{F_1}$ (the type (M2)). 
For $i=2,3$ the zigzag is a sequence of form: 
$$\dots,x,D_{F_i}(x),\dots,-x,-(D_{F_i})^{-1}(x),\dots,-(D_{F_i})^{-1}(x),-D_{F_i}(x),\dots,$$
where $x=e_2$ if $i=2$ and $x=-e'_1$ if $i=3$. 
In other words, the $z$-monodromies of $F_2$ and $F_3$ are of the same type. 
The zigzag passes through edges of $F_2$ as follows
$$\dots,e'_3,-e'_1,\dots,e_2,e'_1,\dots,-e_2,e'_3,\dots$$
and $M_{F_2}=(e_2,-e'_1)(e'_1,-e_2)$. 
If we change the notation
$$e_2=E_1, e'_1=E_2, -e'_3=E_3,$$
then $(E_1,E_2,E_3)$ is one of the cycles in $D_{F_2}$ and $M_{F_2}=(E_1,-E_2)(E_2,-E_1)$. 
The $z$-monodromies of $F_2$ and $F_3$ are of type (M4) and $\mathcal{M}_6=\{{\rm (M2)}, {\rm (M4)}\}$. 

(7). Suppose that the $z$-monodromy of $F$ is of type (M7). 
If 
$$M_{F}=(e_{1},e_{2})(-e_{2},-e_{1}),$$ 
then there are precisely two zigzags (up to reversing) which pass through edges of faces $F_i$: 
$$\dots, e_1,e'_3,-e'_1,e_3,\dots,e_3,e'_2,-e'_3,e_2,\dots\text{ and }\dots,e_2,e'_1,-e'_2,e_1,\dots$$
For $i=1,2$ the zigzags are sequences of form
$$\dots,x,D_{F_i}(x),\dots,-(D_{F_i})^{-1}(x),-D_{F_i}(x),\dots\text{ and }\dots,(D_{F_i})^{-1}(x),x,\dots,$$ 
where $x=e_1$ if $i=1$ and $x=e'_1$ if $i=2$, i.e. the $z$-monodromies of these faces are of the same type. 
For $F_1$ the zigzags are sequences
$$\dots,e_1,e'_3,\dots,e'_2,-e'_3,\dots\text{ and }\dots,-e'_2,e_1,\dots,$$
thus, $M_{F_1}=(-e'_3,e_1,-e'_2)(e'_2,-e_1,e'_3)$. 
If 
$$e_1=E_3, e'_3=E_1, -e'_2=E_2,$$
where $(E_1,E_2,E_3)$ is one of the cycles in $D_{F_1}$, then $M_{F_1}=(-E_1,E_3,E_2)(-E_2,-E_3,E_1)$. 
So, $M_{F_1}$ and $M_{F_2}$ are of type (M6). 
The zigzags pass through edges of $F_3$ as follows 
$$\dots,-e'_1,e_3,\dots,e_3,e'_2,\dots\text{ and }\dots,e'_1,-e'_2,\dots$$ 
and $M_{F_3}=(e'_2,-e'_1)(e'_1,-e'_2)$. 
We rename oriented edges of $F_3$ as follows
$$e_3=E_3, e'_2=E_1, -e'_1=E_2$$
and $M_{F_3}=(E_1,E_2)(-E_2,-E_1)$, where $(E_1,E_2,E_3)$ is one of the cycles in $D_{F_3}$. 
This $z$-monodromy is of type (M7) and 
$\mathcal{M}_7=\{{\rm (M6)}, {\rm (M7)}\}$. 
\end{proof}

\section{Proof of Theorem \ref{th1}}
Our proof is based on well-known properties of Markov chains, see, for example \cite{HagMarkovCh}. 

Consider the time-homogeneous Markov chain $\{X_n\}_{n\in\mathbb{N}}$, where $X_n=j$ if and only if a face with the $z$-monodromy (M$j$) is chosen in $\Theta_{n}$ in the construction. 
Indeed, by Section 6, the probability 
of transition from $j_{n-1}$ to $j_n$ in the $n$-th step depends only on the state $j_{n-1}$ attained in the $(n-1)$-st step and this probability does not depend on $n\in\mathbb{N}$, i.e. 
if $\mathcal{S}=\{1,\dots,7\}$ is the state space, then for every $n\in\mathbb{N}$
$${\rm P}(X_n=j_n|X_{n-1}=j_{n-1},\dots,X_1=j_1)={\rm P}(X_n=j_n|X_{n-1}=j_{n-1})$$
for all $j_1,\dots,j_n\in\mathcal{S}$ and
$${\rm P}(X_n=j|X_{n-1}=i)={\rm P}(X_2=j|X_1=i)=p_{ij}$$
for all $i,j\in\mathcal{S}$.
Thus, the digraph G presented in Fig. 7 is the transition graph of $\{X_n\}_{n\in\mathbb{N}}$ and the matrix 
\[
P=[p_{ij}]=\begin{bmatrix}
    0 & 0 & 0 & 1 & 0 & 0 & 0 \\
    0 & 0 & 0 & 0 & 1 & 0 & 0 \\
    0 & 0 & 0 & 0 & 0 & \frac{1}{3} & \frac{2}{3} \\
    \frac{1}{3} & 0 & \frac{2}{3} & 0 & 0 & 0 & 0 \\
    0 & 0 & 1 & 0 & 0 & 0 & 0 \\
    0 & \frac{1}{3} & 0 & \frac{2}{3} & 0 & 0 & 0 \\
    0 & 0 & 0 & 0 & 0 & \frac{2}{3} & \frac{1}{3} 
\end{bmatrix}
\]
is the transition matrix of this Markov chain. 
It is easy to see that any two states $i,j\in\mathcal{S}$ communicate and $\{X_n\}_{n\in\mathbb{N}}$ is irreducible. 
There is a loop at the state $7$, thus this state is aperiodic and $\{X_n\}_{n\in\mathbb{N}}$ is also aperiodic. 

Let $p_{ij}(n)=(P^n)_{ij}$ be the $n$-step transition probability in $\{X_n\}_{n\in\mathbb{N}}$. 
Since $\{X_n\}_{n\in\mathbb{N}}$ is irreducible and aperiodic finite Markov chain, then it is ergodic with a unique stationary distribution $\pi=[\pi_1,\dots,\pi_7]$ such that
$$\displaystyle{\lim_{n \to \infty} p_{ij}(n)=\pi_j}$$
for all states $i,j\in\mathcal{S}$.
A direct verification shows that the stationary distribution is 
$$\pi=\begin{bmatrix}
    \frac{1}{15} & \frac{1}{15} & \frac{1}{5} & \frac{1}{5} & \frac{1}{15} & \frac{1}{5} & \frac{1}{5} 
\end{bmatrix}\!\!.$$

Recall that $p^{(k)}_n$ is the probability that the construction produces a tetrahedral chain $\Theta_n$ with precisely $k$ zigzags up to reversing. 
Proposition \ref{prop1} implies that
$$\displaystyle{\lim_{n \to \infty} p^{(1)}_n=\lim_{n \to \infty}(p_{i1}(n)+p_{i2}(n)+p_{i3}(n)+p_{i4}(n))=\pi_1+\pi_2+\pi_3+\pi_4=\frac{8}{15}},$$
$$\displaystyle{\lim_{n \to \infty} p^{(2)}_n=\lim_{n \to \infty}(p_{i6}(n)+p_{i7}(n))=\pi_6+\pi_7=\frac{2}{5}},$$
$$\displaystyle{\lim_{n \to \infty} p^{(3)}_n=\lim_{n \to \infty}p_{i5}(n)=\pi_5=\frac{1}{15}}$$
for all $i\in\mathcal{S}$. 

Recall that an irreducible and aperiodic Markov chain with finite state space converges exponentially, i.e.  
for our Markov chain $\{X_n\}_{n\in\mathbb{N}}$, there exist constants $c>0$ and $\gamma\in(0,1)$ such that
$$|p_{ij}(n)-\pi_j|\leq c\gamma^n$$
for all $i,j\in\mathcal{S}$. 
Let $I_1=\{1,2,3,4\}, I_2=\{6,7\}, I_3=\{5\}$ and let denote $\displaystyle{L_k=\lim_{n \to \infty} p^{(k)}_n}$ for all $k\in\{1,2,3\}$. 
We have
$$|p^{(k)}_n-L_k|=\displaystyle{|\sum_{j\in I_k} ({p_{ij}(n)-\pi_j})|}\leq\displaystyle{\sum_{j\in I_k} |p_{ij}(n)-\pi_j|}\leq|I_k|c\gamma^n=2^{3-k}c\gamma^n\leq c'\gamma^n,$$
where $c'=4c$. 
Thus, $p^{(k)}_n$ converges at an exponential rate.

\end{document}